\documentclass[11pt]{amsart}

\usepackage[article]{robertpreamble}
\usepackage[margin=1in]{geometry}

\begin{document}


\title{Unbounded Topology of Nodal Sets of Harmonic Functions}
\author{Robert Koirala}
\address{Department of Mathematics, University of California San Diego}
\email{rkoirala@ucsd.edu}
\date{\today}

\subjclass[2020]{Primary 35J05; Secondary 35B05, 57R19}
\keywords{Nodal sets, harmonic functions, Almgren frequency, Betti numbers}

\begin{abstract}
    For every integer \(n\ge 3\), every \(1\le \ell\le n-2\), and every sufficiently large integer \(m\), we construct harmonic functions \(u_{m,\ell}\) on the unit ball \(B_1(0)\subset\R^n\) such that the frequency is bounded independently of \(m\), every point of the nodal set \(\{u_{m,\ell}=0\}\cap B_{1/2}(0)\) is regular, but the Betti numbers satisfy
    \begin{align*}
        b_\ell\bigl(\{u_{m,\ell}=0\}\cap B_{1/2}(0)\bigr)\ge 2m.
    \end{align*}
    Thus bounded frequency, even together with regularity of the nodal set, does not imply a uniform topological bound. In particular, these examples give counterexamples to the claimed global Betti-number bound of Lin and Liu.
\end{abstract}

\maketitle

\section{Introduction}

Let \(u\) be a nonzero harmonic function on \(B_1(0)\subset\R^n\). For \(0<r<1\), define the Almgren frequency at scale \(r\) by
\begin{equation}\label{eq:frequency-def}
    \mathcal N_r(u)
    \coloneqq 
    \frac{r\int_{B_r(0)}|\nabla u|^2}{\int_{\partial B_r(0)}u^2}.
\end{equation}
This normalization has the property that if \(p\) is a homogeneous harmonic polynomial of degree \(d\), then \(\mathcal N_r(p)=d\) for every \(r>0\).

The guiding question is the following.

\begin{question}\label{question-main}
    Given a bound for the frequency of a harmonic function \(u\), what geometric, measure-theoretic, and topological properties of its nodal set \(u^{-1}(0)\) can be controlled?
\end{question}

For harmonic polynomials, the answer is closely tied to real algebraic geometry. If \(p\) is a polynomial of degree \(d\), then \(p^{-1}(0)\) is a real algebraic variety, and many quantitative properties of \(p^{-1}(0)\) can be bounded in terms of \(d\). Wongkew proved a degree-dependent upper bound on the volume of tubular neighborhoods of real algebraic varieties \cite{MR1211391}. Classical results of Milnor and Thom give degree-dependent bounds for the total Betti number of real algebraic varieties \cite{MR161339,MR200942}.

General harmonic functions share several local features with harmonic polynomials. Their nodal sets are real analytic varieties, and at small scales they can be approximated by homogeneous harmonic polynomials. Moreover, frequency bounds play the role of quantitative degree bounds in many geometric and measure-theoretic estimates for nodal sets. For example, frequency bounds imply quantitative control on tubular neighborhoods of nodal sets; see, for instance, \cite{logunov-2018-nodal,logunov-2018-nodal-sets, logunov-malinnikova-2018-nodal-sets,logunov-malinnikova-2020-review, logunov-malinnikova-2021-the-sharp, MR3688031} and the references therein.

The purpose of this note is to show that this analogy breaks down at the level of topology. Although a frequency bound controls many quantitative features of a harmonic function, it does not control the Betti numbers of its nodal set. Throughout the paper, Betti numbers are taken with coefficients in a fixed field \(\mathbb F\).

\begin{theorem}\label{thm:main-counterexample}
    Fix \(n\ge 3\) and \(1\le \ell\le n-2\). For every sufficiently large integer \(m\), there exists a harmonic function \(u_{m,\ell}\) on \(B_1(0)\subset\R^n\) such that:
    \begin{enumerate}[label=\textup{(\roman*)}]
        \item \(\mathcal N_1(u_{m,\ell})\) is bounded above independently of \(m\);
        \item every point of \(\{u_{m,\ell}=0\}\cap B_{1/2}(0)\) is regular, so the
        pointwise vanishing order at every such nodal point is exactly \(1\);
        \item
        \begin{align*}
            b_\ell\bigl(\{u_{m,\ell}=0\}\cap B_{1/2}(0)\bigr)\ge 2m.
        \end{align*}
    \end{enumerate}
\end{theorem}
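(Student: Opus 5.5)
We will build \(u_{m,\ell}\) as a small perturbation of a fixed harmonic function \(u_0\) whose nodal set contains a degenerate piece. The point is that a small-norm perturbation does not raise the frequency, yet it can change the topology of the zero set drastically. We work in coordinates \(x=(y,z)\in\R^{\ell+1}\times\R^{n-\ell-1}\); the assumption \(\ell\le n-2\) gives \(n-\ell-1\ge1\).

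\textbf{Choice of \(u_0\) and the perturbation.} Take \(u_0\) to be a fixed harmonic polynomial that vanishes to second order along a large \(\ell\)-dimensional set. For example, let \(u_0=-(|y|^2-\rho^2)\,\cdot\) (a suitable factor), corrected to be harmonic, or more concretely the axisymmetric degree-2 polynomial
\[
q(y,z)=(n-\ell-1)|y|^2-(\ell+1)|z|^2 .
\]
We then add an oscillating term of size \(\varepsilon\) that creates \(\sim m\) separate handles. The cleanest concrete plan is to use the product structure. Let \(v_m(z_1,w)\) be a harmonic function in the remaining variables, e.g.
\[
v_m=\cos(m\theta)\,r^{m}\quad\text{or}\quad e^{\lambda z_1}\cos(\lambda w)\ \text{type terms},
\]
chosen so that on the sphere \(\{|y|=\rho\}\) the zero set splits into \(2m\) disjoint parallel copies of \(S^\ell\), each with \(b_\ell=1\). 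Then set
\[
u_{m,\ell}=\bigl(|y|^2-\rho^2\bigr)-c\,(\ell+1)\,z_1^2/(\ell+1)\cdots+\varepsilon_m\, h_m .
\]
A cleaner route is to take \(u=f(y)\,g(z)+\text{correction}\) with \(f\) radial in \(y\). Here \(f\) vanishes on a sphere \(S^\ell\) (up to harmonic correction), and \(g\) is a harmonic function of one variable direction, such as \(\cos(\lambda t)\cosh(\lambda s)\), having \(2m\) sign changes in \(B_{1/2}\). The zero set then locally looks like \(S^\ell\times\{\text{points}\}\) unions, and a Mayer–Vietoris argument shows that the \(2m\) disjoint spherical components each contribute to \(b_\ell\).

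\textbf{Controlling the frequency, the main obstacle.} A high-frequency oscillation \(g\) naively forces the frequency to be about \(m\). The fix is to choose the amplitude \(\varepsilon_m\) exponentially small, e.g. \(\varepsilon_m\approx e^{-Cm}\) relative to \(\|h_m\|_{L^2(\partial B_1)}\). This makes \(\mathcal N_1(u_0+\varepsilon_m h_m)\le \mathcal N_1(u_0)+o(1)\), because both the numerator and the denominator of \eqref{eq:frequency-def} are continuous under \(H^1\)-small perturbations.

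The difficulty is then to make sure such a tiny perturbation still produces \(2m\) topologically distinct \(\ell\)-cycles and leaves every zero regular. For this we design \(u_0\) to vanish identically (to high order) on a thin region, but a harmonic function cannot do that. So we instead let \(u_0\) have a nondegenerate zero set \(Z_0\) near which \(|u_0|\) is small, and let the perturbation act in the degenerate transverse direction.

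This is the step I expect to be hardest. I would first try to choose \(u_0\) with a critical zero set of codimension related to \(\ell\), for instance \(u_0=q\) above, whose zero set is the cone over \(S^\ell\times S^{n-\ell-2}\) with singular vertex at \(0\). We then place \(m\) tiny translated copies of this local picture along a line. That is, we use a harmonic function approximating \(q\) near \(m\) different points \(p_j\), obtained via Runge/Taylor approximation with controlled growth. Resolving each cone singularity by adding a small constant produces a smooth piece with a new \(S^\ell\) cycle. Each such resolution contributes at least \(2\) to \(b_\ell\), through the link \(S^\ell\times S^{n-\ell-2}\) structure, giving \(b_\ell\ge 2m\).

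The remaining verifications are as follows. Regularity, item (ii), follows from \(\nabla u\neq0\) on the zero set, which holds away from the vertices once the constant shift is generic, by Sard's theorem. The count, item (iii), follows from Mayer–Vietoris on a disjoint cover by small balls around the \(p_j\) plus the complement.
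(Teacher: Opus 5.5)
There is a genuine gap. Your frequency step is fine and is essentially what the paper does: an exponentially small amplitude makes the perturbation \(C^1\)-negligible, so \(\mathcal N_1\) stays bounded. But the proposal never fixes a function, and the mechanism you finally commit to cannot be run as stated. Runge/Taylor approximation of translated copies of the Morse cone \(q=(n-\ell-1)|y|^2-(\ell+1)|z|^2\) gives no control of \(\mathcal N_1\); ``controlled growth'' is exactly what would have to be proved.

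Worse, there is an obstruction. Normalize \(\|u_m\|_{L^2(\partial B_1)}=1\). Interior estimates give a subsequence converging in \(C^2(\overline{B_{3/4}})\) to a harmonic \(u_\infty\), and the uniform frequency bound makes \(u_\infty\) nonzero. Each planted copy of the nondegenerate quadratic \(q\) forces a genuine critical point of \(u_m\) nearby, and any limit of these points is a critical point of \(u_\infty\). Near a nondegenerate critical point of \(u_\infty\), a \(C^2\)-close function has at most one critical point. So if all critical points of \(u_\infty\) in \(\overline{B_{1/2}}\) were nondegenerate, \(u_m\) could not have \(2m\) of them. The copies can therefore only cluster at degenerate critical points of the limit, and building such a configuration is the whole difficulty.

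The other pieces also fail:
\begin{enumerate}
\item The product \(f(y)g\) with \(g=\cos(\lambda t)\cosh(\lambda s)\) has frequency of order \(\lambda\) and is singular on \(\{f=0\}\cap\{g=0\}\).
\item \(\{q=c\}\) with \(c>0\) is homotopy equivalent to \(S^\ell\), so each resolution adds \(1\), not \(2\), to \(b_\ell\).
\item Mayer--Vietoris over small balls plus ``the complement'' says nothing about independence unless you know the complement and the overlaps. You need a global map that detects the cycles.
\end{enumerate}

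The missing idea is to start from a harmonic quadratic whose singular set is already non-isolated, and let a single perturbation oscillate along it. The paper uses \(Q_\ell=|X|^2-\ell y^2\) with \(X\in\R^\ell\). Its singular set \(\{X=0,\ y=0\}\) contains the \(z\)-axis. It adds \(\eta_m e^{\lambda_m x_1}\cos(\lambda_m z)\) with \(\eta_m=e^{-\lambda_m}\lambda_m^{-4}\).

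Because \(Q_\ell\) vanishes to second order on its singular set, this exponentially small term still decides the sign of \(\Phi_{m,\ell}=|X|^2+\eta_m e^{\lambda_m x_1}\cos(\lambda_m z)\) near the axis. On each of the \(2m\) intervals where \(\cos(\lambda_m z)<0\), the set \(\{\Phi_{m,\ell}<0\}\) is an \((\ell+1)\)-ball in \((X,z)\)-space: its \(X\)-slices are strictly convex and collapse to points at the endpoints. Its boundary \(\ell\)-sphere, placed at \(y=0\), lies in the nodal set.

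Independence comes from the projection \((X,y,z,w)\mapsto(X,z)\) into \(\R^{\ell+1}\) minus one point per hole. This map is well defined because \(u_{m,\ell}=\Phi_{m,\ell}-\ell y^2<0\) over those points. Regularity is a direct computation of \(\nabla u_{m,\ell}\), with no need for Sard.
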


This gives counterexamples to the claimed global Betti-number bound in \cite{MR3479524}, even in the real-analytic harmonic setting and even when all nodal points under consideration are regular. The issue is not local regularity. Rather, the obstruction is global: a local topological bound on each ball \(B_{r(y)}(y)\) does not imply a global topological bound unless the number of balls needed to cover the nodal set is also uniformly controlled. Compactness gives a finite subcover, but it gives no uniform bound on the cardinality of such a subcover. In our examples, the relevant quantitative scale degenerates near an almost-critical set, allowing arbitrarily many independent cycles while the frequency and the pointwise vanishing order remain uniformly bounded.

\begin{figure}[!htb]
    \centering
    \includegraphics[width=0.7\linewidth]{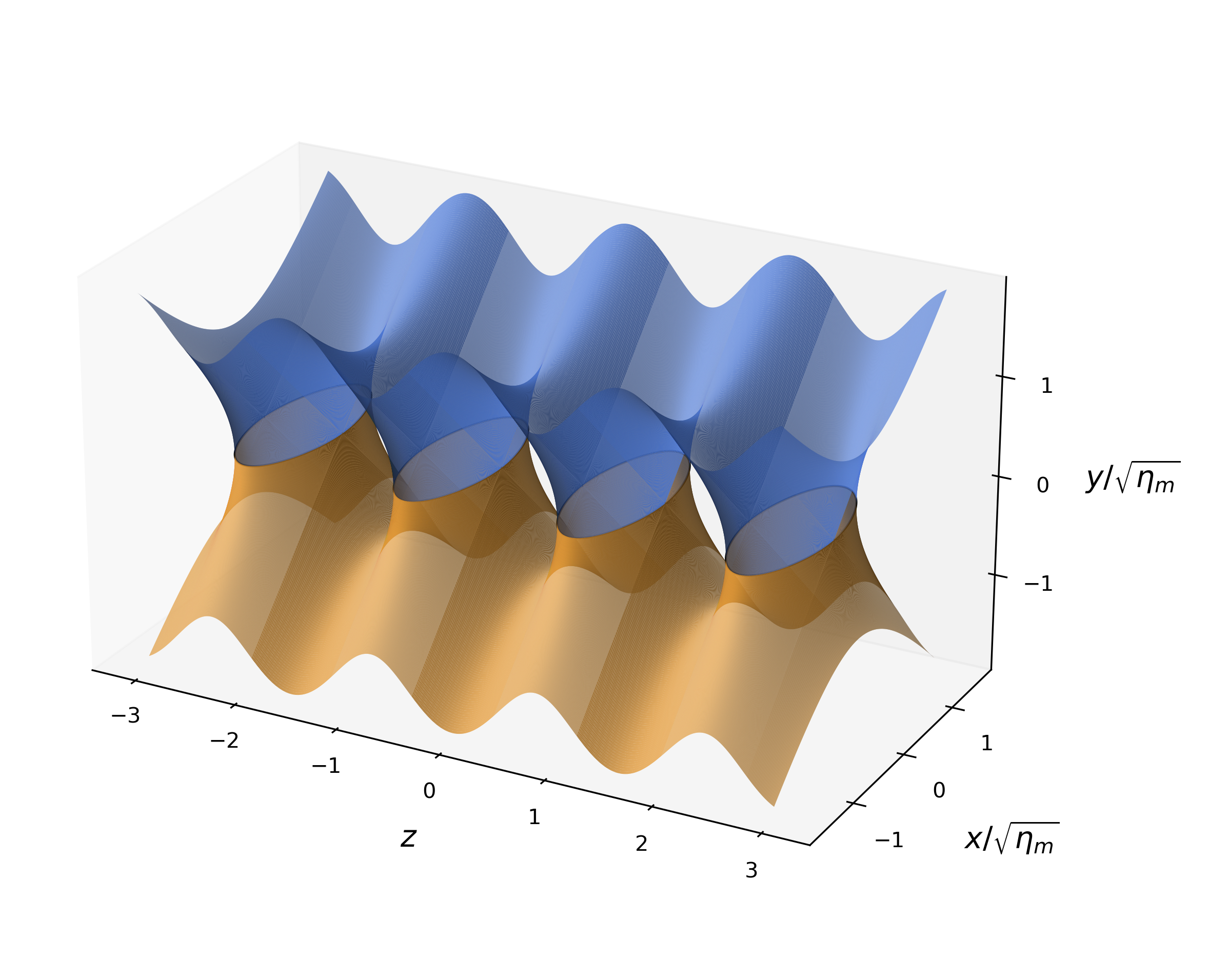}
    \caption{The nodal surface in the case \(\ell=1\), shown in rescaled coordinates for \(m=4\).}
    \label{fig:nodal-surface-m4}
\end{figure}

We briefly describe the construction. In the three-dimensional case, one starts from the harmonic polynomial \(x^2-y^2\), whose nodal set has a crossing along the \(z\)-axis, and perturbs it by an exponentially small oscillatory harmonic term:
\begin{align*}
    x^2-y^2+\eta_m e^{\lambda_m x}\cos(\lambda_m z),
    \qquad
    \lambda_m=8\pi m,\qquad
    \eta_m=e^{-\lambda_m}\lambda_m^{-4}.
\end{align*}
The amplitude is chosen so small that the frequency at scale \(1\) remains uniformly bounded and no singular nodal points are created.  At the same time, the oscillation in the \(z\)-variable creates \(2m\) small holes in the planar projection of the nodal surface, and loops around these holes lift to independent one-cycles.

For higher Betti numbers, we replace the crossing \(x^2-y^2\) by the harmonic quadratic cone
\begin{align*}
    |X|^2-\ell y^2,
    \qquad X\in\R^\ell.
\end{align*}
The same oscillatory perturbation creates \(2m\) holes in the \((X,z)\)-projection. Their boundaries are \(\ell\)-spheres, and these spheres lie in the nodal set. Projection to punctured \(\R^{\ell+1}\) then shows that the corresponding homology classes are independent.

\subsection*{Acknowledgments} We thank Bennett Chow for encouragement.

\section{Proof of the main theorem}

Throughout the proof, \(C\) denotes a positive constant independent of \(m\), though it may depend on \(n\) and \(\ell\). Fix \(n\ge3\) and \(1\le \ell\le n-2\). Write
\begin{align}
    (X,y,z,w)\in \R^\ell\times\R\times\R\times\R^{n-\ell-2},
    \qquad X=(x_1,\ldots,x_\ell),
\end{align}
with the convention that the \(w\)-variables are absent when \(\ell=n-2\). Set
\begin{equation}\label{eq:lambda-eta}
    \lambda_m\coloneqq 8\pi m,
    \qquad
    \eta_m\coloneqq e^{-\lambda_m}\lambda_m^{-4},
\end{equation}
and define
\begin{equation}\label{eq:u-ell-def}
    u_{m,\ell}(X,y,z,w)
    \coloneqq 
    |X|^2-\ell y^2+
    \eta_m e^{\lambda_m x_1}\cos(\lambda_m z).
\end{equation}

\subsection{Frequency and regularity}

It is straight forward to check that \(u_{m,\ell}\) satisfies \(\Delta u_{m,\ell}=0\) in \(B_1(0)\subset\R^n\). Moreover,
\begin{equation}\label{eq:C1-small-general}
    \left\|\eta_m e^{\lambda_m x_1}\cos(\lambda_m z)\right\|_{L^\infty(B_1)}
    \le \lambda_m^{-4},
    \qquad
    \left\|\nabla\bigl(\eta_m e^{\lambda_m x_1}\cos(\lambda_m z)\bigr)\right\|_{L^\infty(B_1)}
    \le C\lambda_m^{-3}.
\end{equation}
Hence \(u_{m,\ell}\longrightarrow Q_\ell\coloneqq |X|^2-\ell y^2\) in \(C^1(\overline{B_1}).\) It follows that
\begin{align*}
    \int_{\partial B_1} u_{m,\ell}^2\,d\sigma
    \longrightarrow
    \int_{\partial B_1} Q_\ell^2\,d\sigma>0, \qquad
    \int_{B_1}|\nabla u_{m,\ell}|^2\,dx
    \longrightarrow
    \int_{B_1}|\nabla Q_\ell|^2\,dx.
\end{align*}
Therefore, for all sufficiently large \(m\),
\begin{equation}\label{eq:frequency-bound-general}
    \mathcal N_1(u_{m,\ell})\le C.
\end{equation}

\begin{lemma}\label{lem:regular-zero-general}
    For all sufficiently large \(m\), zero is a regular value of \(u_{m,\ell}\) in \(B_1(0)\). In particular, every nodal point of \(u_{m,\ell}\) in \(B_{1/2}(0)\) has pointwise vanishing order exactly \(1\).
\end{lemma}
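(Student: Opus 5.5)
We argue by contradiction at a hypothetical critical point \(p=(X,y,z,w)\in B_1(0)\) with \(u_{m,\ell}(p)=0\) and \(\nabla u_{m,\ell}(p)=0\). Write \(E\coloneqq \eta_m e^{\lambda_m x_1}\). The components of the gradient are
\begin{align*}
    \partial_{x_1}u&=2x_1+\lambda_m E\cos(\lambda_m z), &
    \partial_{x_j}u&=2x_j\ (j\ge2), \\
    \partial_y u&=-2\ell y, &
    \partial_z u&=-\lambda_m E\sin(\lambda_m z).
\end{align*}
The \(w\)-derivatives vanish identically.

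First we use the trivial components. Setting \(\partial_{x_j}u=0\) and \(\partial_yu=0\) gives \(x_j=0\) for \(j\ge2\) and \(y=0\). Since \(E>0\), the equation \(\partial_zu=0\) forces \(\sin(\lambda_m z)=0\), so \(\cos(\lambda_m z)=\sigma\in\{\pm1\}\).

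Next we reduce to a one-variable system in \(t\coloneqq x_1\in(-1,1)\). The two remaining equations become
\begin{align*}
    t^2+\sigma E(t)&=0, &
    2t+\lambda_m\sigma E(t)&=0,
    \qquad E(t)=\eta_m e^{\lambda_m t}.
\end{align*}
The first equation forces \(\sigma=-1\), since otherwise \(t^2+E>0\). With \(\sigma=-1\) we have \(E=t^2\). Substituting into the second equation gives \(2t=\lambda_m t^2\), so either \(t=0\) or \(t=2/\lambda_m\).

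We now rule out both possibilities. If \(t=0\), then \(E(0)=\eta_m>0\) but \(t^2=0\), a contradiction. If \(t=2/\lambda_m\), then
\begin{align*}
    E=\eta_m e^{2}=e^{2-\lambda_m}\lambda_m^{-4},
\end{align*}
while \(t^2=4\lambda_m^{-2}\). These cannot be equal once \(m\) is large, because \(e^{2-\lambda_m}\lambda_m^{-4}\ll 4\lambda_m^{-2}\). (Indeed, equality would require \(e^{\lambda_m-2}=\lambda_m^{-2}/4\), which is impossible for \(\lambda_m\ge 2\).) Hence no such \(p\) exists, and zero is a regular value on all of \(B_1(0)\).

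The consequence for vanishing order is then immediate. Every nodal point \(q\) has \(u(q)=0\) and \(\nabla u(q)\neq0\), so the vanishing order at \(q\) is exactly \(1\). This holds in particular on \(B_{1/2}(0)\).

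The only delicate point is that the size bounds \eqref{eq:C1-small-general} alone cannot exclude critical points: the gradient of \(Q_\ell\) vanishes along the \(\{X=0,y=0\}\) axis, which is exactly where the perturbation matters. The argument therefore relies on the exact algebraic structure of the equations rather than on those estimates. The main thing to verify carefully is that the \(\cos\)/\(\sin\) elimination is exhaustive, and it is, since \(E\) never vanishes.
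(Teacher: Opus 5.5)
Your proof is correct and follows the paper's argument essentially step for step. You kill the trivial gradient components, force \(\sin(\lambda_m z)=0\), rule out \(\cos(\lambda_m z)=1\) by positivity, and reduce to \(x_1=2/\lambda_m\), which contradicts \(x_1^2=\eta_m e^{\lambda_m x_1}\); your explicit check also shows this final contradiction holds for every \(m\ge1\), not just for large \(m\).
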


\begin{proof}
    We compute
    \begin{align*}
        \nabla u_{m,\ell}
        =
        \bigl(
            2x_1+
            \eta_m\lambda_m e^{\lambda_m x_1}\cos(\lambda_m z),
            2x_2,\ldots,2x_\ell,
            -2\ell y,
            -\eta_m\lambda_m e^{\lambda_m x_1}\sin(\lambda_m z),
            0_w
        \bigr).
    \end{align*}
    Suppose, for the sake of contradiction, that \(u_{m,\ell}=0\) and \(\nabla u_{m,\ell}=0\) at some point of \(B_1(0)\). Then
    \begin{align*}
        x_2=\cdots=x_\ell=0,
        \qquad
        y=0,
        \qquad
        \sin(\lambda_m z)=0.
    \end{align*}
    Thus \(\cos(\lambda_m z)=\pm1\). If \(\cos(\lambda_m z)=1\), then
    \begin{align*}
        u_{m,\ell}=x_1^2+
        \eta_m e^{\lambda_m x_1}>0,
    \end{align*}
    contradicting \(u_{m,\ell}=0\). Hence \(\cos(\lambda_m z)=-1\). The equations \(u_{m,\ell}=0\) and \(\partial_{x_1}u_{m,\ell}=0\) become
    \begin{equation}\label{eq:critical-system-general}
        x_1^2=\eta_m e^{\lambda_m x_1},
        \qquad
        2x_1=\eta_m\lambda_m e^{\lambda_m x_1}.
    \end{equation}
    The first equation gives \(x_1\ne0\). Dividing the second equation by the first gives \(x_1=\frac{2}{\lambda_m}.\) Substitution into the first equation in \eqref{eq:critical-system-general} gives
    \begin{align*}
        \frac{4}{\lambda_m^2}
        =
        \eta_m e^2
        =
        e^{-\lambda_m}e^2\lambda_m^{-4},
    \end{align*}
    which is impossible for all sufficiently large \(m\). Therefore \(\nabla u_{m,\ell}\ne0\) at every point of \(\{u_{m,\ell}=0\}\cap B_1(0)\).
\end{proof}

\subsection{Projected holes}

Define
\begin{equation}\label{eq:Phi-general-def}
    \Phi_{m,\ell}(X,z)
    \coloneqq 
    |X|^2+
    \eta_m e^{\lambda_m x_1}\cos(\lambda_m z).
\end{equation}
Then \(\{u_{m,\ell}=0\} = \{(X,y,z,w):\ell y^2=\Phi_{m,\ell}(X,z)\}.\) Thus, after suppressing the \(w\)-variables, the nodal set is a two-sheeted graph over the region \(\{\Phi_{m,\ell}\ge0\}\), with sheets \(y=\pm\sqrt{\Phi_{m,\ell}(X,z)/\ell}\) meeting along the branch set \(\{\Phi_{m,\ell}=0,\ y=0\}\).

Let
\begin{equation}\label{eq:R-ell-def}
    \sigma\coloneqq \frac1{16},
    \qquad
    R_\ell\coloneqq \overline{B^\ell_\sigma(0)}\times
    \left[-\frac14,\frac14\right]
    \subset \R^{\ell+1}_{X,z}.
\end{equation}
Since \(\lambda_m=8\pi m\), the phase \(\lambda_m z\) runs from \(-2\pi m\) to \(2\pi m\) as \(z\) runs over \([-1/4,1/4]\). The intervals on which \(\cos(\lambda_m z)<0\) in this window are
\begin{equation}\label{eq:intervals-general}
    I_{j,m}
    \coloneqq 
    \left(
        \frac{\pi/2+2\pi j}{\lambda_m},
        \frac{3\pi/2+2\pi j}{\lambda_m}
    \right),
    \qquad
    j=-m,-m+1,\ldots,m-1.
\end{equation}
There are exactly \(2m\) such intervals.

For \(j=-m,\ldots,m-1\), define
\begin{equation}\label{eq:Hjm-general-def}
    H_{j,m}^{(\ell)}
    \coloneqq 
    \{(X,z)\in R_\ell:z\in I_{j,m},\ \Phi_{m,\ell}(X,z)<0\}.
\end{equation}

\begin{lemma}\label{lem:holes-general}
    For all sufficiently large \(m\), each \(H_{j,m}^{(\ell)}\) is an open topological \((\ell+1)\)-ball whose closure is compactly contained in \(R_\ell\). The \(2m\) sets \(H_{j,m}^{(\ell)}\) are pairwise disjoint, and
    \begin{align}
        D_{m,\ell}\coloneqq R_\ell\cap\{\Phi_{m,\ell}\ge0\}
    \end{align}
    is \(R_\ell\) with these \(2m\) open balls removed. In particular, \(\partial H_{j,m}^{(\ell)}\) is a smooth \(\ell\)-sphere.
\end{lemma}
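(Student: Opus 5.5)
The plan is to analyze the sublevel set \(\{\Phi_{m,\ell}<0\}\) directly. First I locate where negativity can occur. If \(\cos(\lambda_m z)\ge 0\), then \(\Phi_{m,\ell}\ge|X|^2\ge0\), so \(\{\Phi_{m,\ell}<0\}\cap R_\ell\) lies in \(\bigcup_j \overline{B^\ell_\sigma}\times I_{j,m}\). It also lies where \(|X|^2<\eta_m e^{\lambda_m x_1}\). Since \(\eta_m e^{\lambda_m x_1}\le \lambda_m^{-4}e^{\lambda_m(x_1-1)}\), this forces \(x_1>1-C\lambda_m^{-1}\log\lambda_m\) unless \(|X|\) is tiny. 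I expect the negativity to be confined to \(|X|\lesssim \lambda_m^{-2}e^{-\lambda_m/2}\), far inside \(B^\ell_\sigma\). Concretely, I would show that \(\Phi<0\) forces \(|X|^2<\eta_m e^{\lambda_m|X|}\), and that \(t^2<\eta_m e^{\lambda_m t}\) with \(t\le\sigma\) forces \(t\le 2\sqrt{\eta_m}\). The second claim holds because at \(t=2\sqrt{\eta_m}\) we have \(\lambda_m t\ll1\), and \(t^2/e^{\lambda_m t}\) is increasing on \([0,2/\lambda_m]\). On \([2/\lambda_m,\sigma]\) we have \(t^2\ge 4\lambda_m^{-2}>\eta_m e^{\lambda_m/16}\).

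With that confinement, the disjointness of the \(H_{j,m}^{(\ell)}\) is immediate, since the \(I_{j,m}\) are disjoint. For compact containment, note that \(\Phi>0\) at the endpoints of each \(I_{j,m}\) and wherever \(|X|\) is not tiny. The closure of \(\{\Phi<0\}\) inside a slab therefore stays away from \(\partial R_\ell\), and even from the endpoints \(z\in\partial I_{j,m}\), because \(\cos\) vanishes there and \(\Phi\ge|X|^2\) with equality only at \(X=0\), where \(\Phi=0\). One subtlety is that the closure could touch \(X=0\) at the endpoints of \(I_{j,m}\). However, \(\{\Phi<0\}\) requires \(|X|^2<\eta_m e^{\lambda_m x_1}|\cos\lambda_m z|\), so it shrinks to the point \((0,z_{\mathrm{end}})\), which lies in the interior of \(R_\ell\). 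That suffices for compact containment in \(R_\ell\). Consequently \(D_{m,\ell}\) is \(R_\ell\) minus the \(2m\) open sets.

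The main step is showing that each \(H_{j,m}^{(\ell)}\) is an open \((\ell+1)\)-ball with smooth sphere boundary. For the boundary, smoothness needs \(\nabla\Phi\ne0\) on \(\{\Phi=0\}\cap R_\ell\). By the computation in Lemma~\ref{lem:regular-zero-general} (with \(y=0\)), the only candidates are \(X=(x_1,0,\dots,0)\) and \(\sin\lambda_m z=0\). The endpoints of \(I_{j,m}\) are not of this form. The problematic points are instead where \(\cos\lambda_m z=0\) and \(X=0\), since there \(\nabla\Phi=(0,\dots,0,-\eta_m\lambda_m\sin(\lambda_m z))\ne0\). So zero is a regular value of \(\Phi\), and \(\partial H\) is a smooth closed hypersurface passing through the two endpoint poles \((0,z_\pm)\).

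To get the ball structure, I would use a star-shaped or radial-graph description. For fixed \(z\in I_{j,m}\) and a direction \(\theta\in S^{\ell-1}\), consider \(f(t)=t^2+\eta_m c\,e^{\lambda_m t\theta_1}\) with \(c=\cos\lambda_m z<0\). Then \(f(0)<0\), and \(f'(t)=2t+\eta_m c\lambda_m\theta_1e^{\lambda_m t\theta_1}\). By the confinement, the relevant \(t\) satisfy \(\lambda_m t\ll1\). Using \(f=0\), we get \(2t\) versus \(\eta_m\lambda_m|c|e^{\cdot}\approx\lambda_m t^2\), and \(\lambda_m t\ll1\) shows \(f'>0\) at every zero. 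Hence there is a unique root \(t=\rho(z,\theta)>0\), depending smoothly on \((z,\theta)\) by the implicit function theorem, with \(\rho\to0\) as \(z\to\partial I_{j,m}\). Thus \(H\) is \(\{(t\theta,z):z\in I_{j,m},\,0\le t<\rho(z,\theta)\}\), a fiberwise-star-shaped region over an interval with fibers shrinking to points at both ends. This is homeomorphic to an open \((\ell+1)\)-ball, for example by rescaling each slice radially to \(\{|X|<\operatorname{dist}(z,\partial I_{j,m})\}\), which gives a double cone whose interior is a ball. Its boundary is the smooth hypersurface above, homeomorphic to the suspension of \(S^{\ell-1}\). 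Since it is a smooth closed manifold, it is a smooth \(\ell\)-sphere; I expect that checking diffeomorphism rather than homeomorphism is unnecessary, since the statement only uses it as a cycle.
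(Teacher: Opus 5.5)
Your argument is correct, but the key step runs on a different mechanism from the paper's.

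\textbf{The paper's route.} The paper does not localize the holes quantitatively. It gets \(\Phi_{m,\ell}>0\) on \(\partial R_\ell\) from a one-line estimate. It then uses the Hessian bound \(D_X^2\Phi_{m,\ell}\ge I_\ell\) to make each negative slice a strictly convex domain, parametrized radially about the slice minimizer \(c(z)\). Smoothness of the slice boundaries comes from the fact that the only critical point of a strictly convex function is its minimum, and \(\partial_z\Phi_{m,\ell}\neq0\) handles the two poles.

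\textbf{Your route.} You prove the confinement \(|X|<2\sqrt{\eta_m}\). You use it to show that \(\partial_t\Phi_{m,\ell}(t\theta,z)=t(2-\lambda_m\theta_1 t)>0\) at every zero on a ray issuing from the axis \(X=0\). This makes each slice star-shaped about the fixed point \(X=0\) without any convexity, and you take regularity of \(\{\Phi_{m,\ell}=0\}\) directly from Lemma~\ref{lem:regular-zero-general}.

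\textbf{What each buys.} The paper's argument is shorter and qualitative: convexity delivers connectedness and star-shapedness in one stroke. Yours avoids the moving center \(c(z)\) and the unverified assertion \(c(z)\to0\). It also makes the rescaling to a double cone explicit. Continuity of \(\rho\) up to the poles is explicit too, via \(\rho^2\le\eta_m e^{\lambda_m\sigma}|\cos(\lambda_m z)|\). As a by-product, it records that the holes have \(X\)-width \(O(e^{-\lambda_m/2}\lambda_m^{-2})\).

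\textbf{Minor cleanups for a write-up.}
\begin{enumerate}
  \item State existence of the root via \(f(\sigma)>0\), which follows from your confinement.
  \item Note that \(\overline{I_{j,m}}\subset(-1/4,1/4)\), since the extreme endpoints lie at distance \(1/(16m)\) from \(\pm1/4\); this is what puts the poles in the interior of \(R_\ell\).
  \item Drop the remark that \(\Phi_{m,\ell}>0\) at the endpoints of \(I_{j,m}\). It vanishes at \(X=0\) there, as you yourself observe a line later.
  \item Drop the aside about \(x_1>1-C\lambda_m^{-1}\log\lambda_m\); it is irrelevant, since \(x_1\le\sigma\) on \(R_\ell\).
\end{enumerate}
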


\begin{figure}[!htb]
    \centering
    \includegraphics[width=\linewidth]{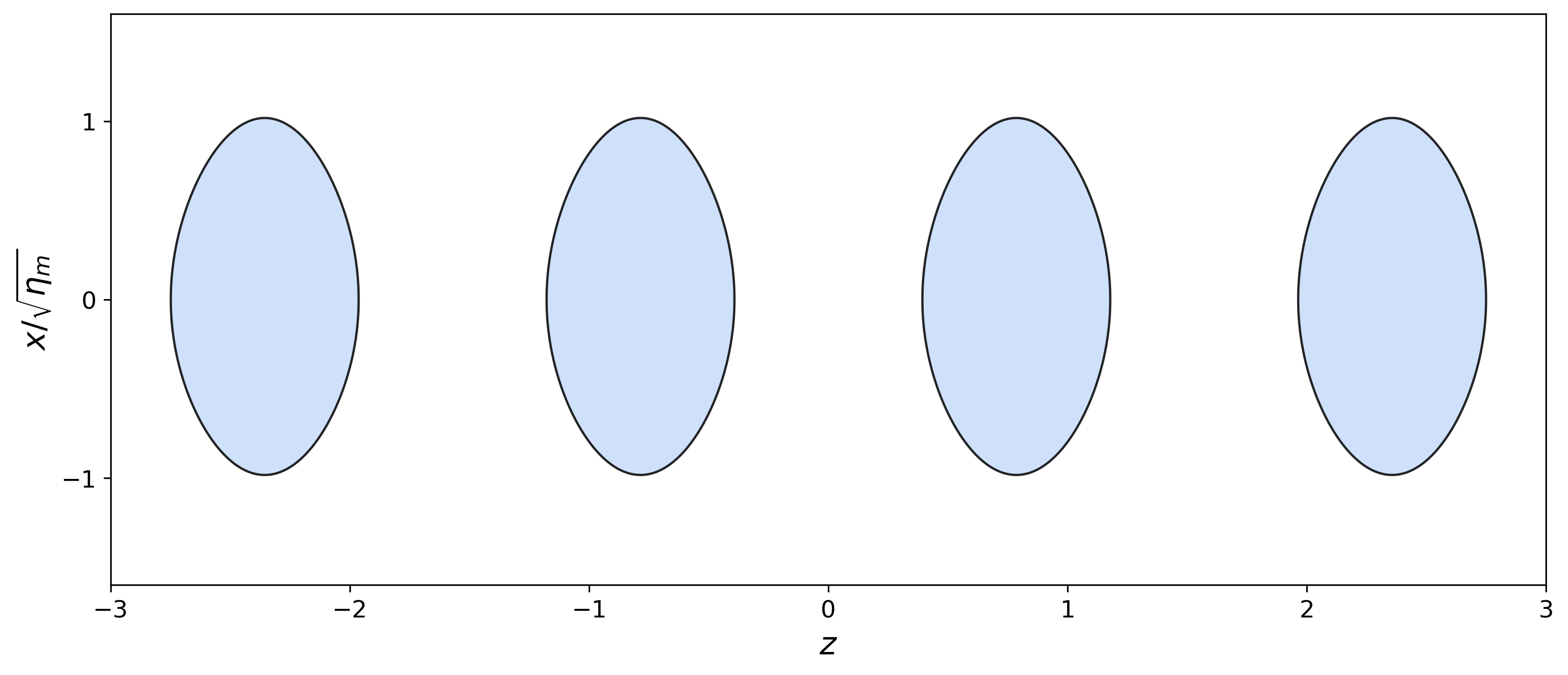}
    \caption{Projected disks in the \((x,z)\)-plane for \(\ell=1\), \(m=4\), \(n=3\).}
    \label{fig:projected-holes-m4}
\end{figure}

\begin{proof}
    We first show that \(\Phi_{m,\ell}>0\) on \(\partial R_\ell\) for all sufficiently large \(m\). On the horizontal sides \(z=\pm1/4\), we have \(\cos(\lambda_m z)=\cos(\pm 2\pi m)=1\), and hence
    \begin{align*}
        \Phi_{m,\ell}(X,\pm1/4)
        =
        |X|^2+
        \eta_m e^{\lambda_m x_1}>0.
    \end{align*}
    On the vertical side \(|X|=\sigma\), using \(x_1\le |X|=\sigma\), we get
    \begin{align*}
        \Phi_{m,\ell}(X,z)
        \ge
        \sigma^2-
        \eta_m e^{\lambda_m\sigma}
        =
        \sigma^2-e^{-(1-\sigma)\lambda_m}\lambda_m^{-4}>0
    \end{align*}
    for all sufficiently large \(m\).
    
    Next, on \(R_\ell\),
    \begin{align*}
        D_X^2\Phi_{m,\ell}(X,z)
        =
        2I_\ell+
        \eta_m\lambda_m^2e^{\lambda_m x_1}\cos(\lambda_m z)
        \, e_1\otimes e_1
        \ge I_\ell
    \end{align*}
    for all sufficiently large \(m\). Hence, for each fixed \(z\), the function \(X\mapsto \Phi_{m,\ell}(X,z)\) is strictly convex on \(\overline{B^\ell_\sigma(0)}\).
    
    If \(\cos(\lambda_m z)\ge0\), then
    \begin{align*}
        \Phi_{m,\ell}(X,z)\ge |X|^2\ge0,
    \end{align*}
    so there is no negative set over that value of \(z\). If \(z\in I_{j,m}\), then
    \begin{align*}
        \Phi_{m,\ell}(0,z)=\eta_m\cos(\lambda_m z)<0,
    \end{align*}
    whereas \(\Phi_{m,\ell}>0\) on \(|X|=\sigma\). Therefore the slice \(\{X\in B^\ell_\sigma(0):\Phi_{m,\ell}(X,z)<0\}\) is a nonempty bounded strictly convex domain. Its boundary is smooth: a critical point of the strictly convex function \(X\mapsto \Phi_{m,\ell}(X,z)\) is its unique minimum, while \(\Phi_{m,\ell}(0,z)<0\), so the zero level cannot contain a critical point in the \(X\)-variables.

    It remains to understand the topology of \(H_{j,m}^{(\ell)}\). Let \(z_-\) and \(z_+\) be the endpoints of \(I_{j,m}\). At either endpoint \(z_0\), one has \(\cos(\lambda_m z_0)=0\), and hence
    \begin{align*}
        \Phi_{m,\ell}(X,z_0)=|X|^2.
    \end{align*}
    Thus the negative slice collapses to the single point \(X=0\). Moreover,
    \begin{align*}
        \partial_z\Phi_{m,\ell}(0,z_0)
        =
        -\eta_m\lambda_m\sin(\lambda_m z_0)\ne0,
    \end{align*} 
    so \(\{\Phi_{m,\ell}=0\}\) is a smooth hypersurface near \((0,z_0)\).
    
    For each \(z\in I_{j,m}\), the strictly convex function \(X\mapsto \Phi_{m,\ell}(X,z)\) has a unique minimizer \(c(z)\), and \(c(z)\) depends smoothly on \(z\). Since the negative slice collapses to \(\{0\}\) at the endpoints, \(c(z)\to0\) as \(z\to z_\pm\). For each \(\omega\in S^{\ell-1}\), strict convexity gives a unique number \(\rho(z,\omega)>0\) such that
    \begin{align*}
        \Phi_{m,\ell}\bigl(c(z)+\rho(z,\omega)\omega,z\bigr)=0.
    \end{align*}
    Moreover \(\rho(z,\omega)\to0\) as \(z\to z_\pm\), uniformly in \(\omega\). Therefore
    \begin{align*}
        (z,t,\omega)
        \longmapsto
        \bigl(c(z)+t\rho(z,\omega)\omega,z\bigr),
        \qquad
        0\le t\le1,
    \end{align*}
    parametrizes \(\overline{H_{j,m}^{(\ell)}}\), with the two endpoint fibers collapsed to points. The displayed parametrization induces a homeomorphism from a closed \((\ell+1)\)-ball onto \(\overline{H_{j,m}^{(\ell)}}\). Its boundary is the smooth hypersurface \(\{\Phi_{m,\ell}=0\}\cap \bigl(\overline{B^\ell_\sigma(0)}\times \overline{I_{j,m}}\bigr),\) and hence is a smooth \(\ell\)-sphere.
    
    The intervals \(I_{j,m}\) are pairwise disjoint, so the corresponding balls are pairwise disjoint. Since \(\Phi_{m,\ell}>0\) on \(\partial R_\ell\), their closures are compactly contained in \(R_\ell\). Finally, the negative set of \(\Phi_{m,\ell}\) in \(R_\ell\) is exactly the union of the \(H_{j,m}^{(\ell)}\), which proves the lemma.
\end{proof}

\subsection{Independent higher-dimensional cycles}

Let
\begin{equation}\label{eq:Z-ell-def}
    Z_{m,\ell}\coloneqq \{u_{m,\ell}=0\}\cap B_{1/2}(0).
\end{equation}

\begin{lemma}\label{lem:higher-betti-lower}
    For all sufficiently large \(m\),
    \begin{align*}
        b_\ell(Z_{m,\ell})\ge 2m.
    \end{align*}
\end{lemma}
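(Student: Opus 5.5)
Using Lemma~\ref{lem:holes-general}, I will produce $2m$ explicit $\ell$-cycles in $Z_{m,\ell}$ and detect their independence by pushing forward under a projection. Let $S_j\coloneqq\partial H^{(\ell)}_{j,m}$, a smooth $\ell$-sphere in $R_\ell$ on which $\Phi_{m,\ell}=0$. Embed it in the nodal set via
\begin{align*}
    \iota_j:S_j\to\R^n,\qquad (X,z)\mapsto (X,0,z,0_w).
\end{align*}
Since $\ell y^2=\Phi_{m,\ell}(X,z)$ holds with $y=0$, the image lies in $\{u_{m,\ell}=0\}$. Moreover $|X|\le\sigma=1/16$ and $|z|\le 1/4$ on $S_j$, so $|(X,0,z,0)|^2\le 1/256+1/16<1/4$ and $\iota_j(S_j)\subset Z_{m,\ell}$. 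Let $\alpha_j\in H_\ell(Z_{m,\ell};\mathbb F)$ be the image of the fundamental class of $S_j$; with field coefficients, $\mathbb F$-orientability is not an issue since $S_j$ is a sphere.

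To prove the $\alpha_j$ are linearly independent, I will use the projection $\pi(X,y,z,w)=(X,z)$. On $Z_{m,\ell}$ we have $\Phi_{m,\ell}(\pi(p))=\ell y^2\ge 0$, so $\pi(Z_{m,\ell})$ avoids every open hole $H^{(\ell)}_{j,m}$. In particular, choosing a point $q_j\in H^{(\ell)}_{j,m}$ for each $j$, we get a continuous map
\begin{align*}
    \pi:Z_{m,\ell}\to \R^{\ell+1}\setminus\{q_{-m},\ldots,q_{m-1}\}.
\end{align*}
This target is homotopy equivalent to a wedge of $2m$ copies of $S^\ell$, so $H_\ell\cong\mathbb F^{2m}$. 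The dual basis is given by the local linking/degree maps $\deg_{q_k}$, which compute the winding of an $\ell$-cycle around $q_k$. Since $\pi\circ\iota_j$ is the inclusion of $S_j=\partial H_{j,m}$, Alexander duality (or the Jordan--Brouwer separation theorem for the smooth sphere $S_j$ bounding the ball $\overline{H_{j,m}}$) gives $\deg_{q_k}(\pi_*\alpha_j)=\pm\delta_{jk}$. Here we use that the holes are pairwise disjoint with disjoint closures, so $q_k$ lies outside $\overline{H_{j,m}}$ for $k\ne j$. Hence $\pi_*\alpha_{-m},\ldots,\pi_*\alpha_{m-1}$ are linearly independent in $H_\ell$ of the target, and therefore so are the $\alpha_j$. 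This yields $b_\ell(Z_{m,\ell})\ge 2m$.

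The main point requiring care is the degree computation $\deg_{q_j}(S_j)=\pm1$. I would justify it as follows. $\overline{H_{j,m}}$ is a closed topological $(\ell+1)$-ball whose boundary is the smooth sphere $S_j$, by Lemma~\ref{lem:holes-general}. The inclusion $S_j\hookrightarrow \overline{H_{j,m}}\setminus\{q_j\}$ is then a homotopy equivalence onto a space deformation-retracting to $S_j$, so its class generates $H_\ell(\R^{\ell+1}\setminus\{q_j\})$. For $k\neq j$, the class is zero in $H_\ell(\R^{\ell+1}\setminus\{q_k\})$ because $S_j$ bounds $\overline{H_{j,m}}$, which misses $q_k$. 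Everything else is bookkeeping: the containment in $B_{1/2}$, and the fact that the nodal set's projection misses the holes.
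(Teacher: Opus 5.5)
Your proof is correct and is essentially the paper's argument. It uses the same $\ell$-spheres $\iota_j(\partial H^{(\ell)}_{j,m})$ in the slice $\{y=0,\ w=0\}$, the same projection $\pi$ to $\R^{\ell+1}$ minus one point per hole, and the same detection in $H_\ell$ of the punctured space. The only difference is that the paper simply asserts that the classes $[\partial H^{(\ell)}_{j,m}]$ form the standard basis and concludes that $\pi_*$ is surjective. You instead verify $\deg_{q_k}(\pi_*\alpha_j)=\pm\delta_{jk}$, using that $\overline{H^{(\ell)}_{j,m}}$ is a closed ball missing $q_k$ for $k\neq j$, and conclude directly that the $\alpha_j$ are linearly independent.
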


\begin{proof}
    Choose one point
    \begin{align*}
        p_{j,m}\in H_{j,m}^{(\ell)},
        \qquad j=-m,\ldots,m-1,
    \end{align*}
    and set
    \begin{align*}
        P_m\coloneqq \{p_{j,m}:j=-m,\ldots,m-1\}\subset\R^{\ell+1}_{X,z}.
    \end{align*}
    Since \(\Phi_{m,\ell}(p_{j,m})<0\), no point of the nodal set projects to \(p_{j,m}\). Indeed, if \((X,z)=p_{j,m}\), then
    \begin{align*}
        u_{m,\ell}(X,y,z,w)
        =
        \Phi_{m,\ell}(p_{j,m})-\ell y^2<0.
    \end{align*}
    Therefore the projection
    \begin{equation}\label{eq:projection-general}
        \pi:Z_{m,\ell}\to \R^{\ell+1}\setminus P_m,
        \qquad
        \pi(X,y,z,w)=(X,z),
    \end{equation}
    is well-defined.
    
    For each \(j\), define
    \begin{align*}
        \Gamma_{j,m}
        \coloneqq 
        \{(X,0,z,0):(X,z)\in\partial H_{j,m}^{(\ell)}\}.
    \end{align*}
    Since \(\Phi_{m,\ell}=0\) on \(\partial H_{j,m}^{(\ell)}\), we have \(\Gamma_{j,m}\subset Z_{m,\ell}\). Moreover, \(\Gamma_{j,m}\cong \partial H_{j,m}^{(\ell)}\cong S^\ell.\) These spheres lie in \(B_{1/2}(0)\). Indeed, if \((X,z)\in R_\ell\), then
    \begin{align*}
        |X|\le \sigma=\frac1{16},
        \qquad
        |z|\le\frac14,
    \end{align*}
    and on \(\Gamma_{j,m}\) one has \(y=0\) and \(w=0\). Thus
    \begin{align*}
        |X|^2+y^2+z^2+|w|^2
        \le
        \frac1{256}+\frac1{16}
        <
        \frac14.
    \end{align*}

    The homology group \(H_\ell(\R^{\ell+1}\setminus P_m;\mathbb F)\) is naturally isomorphic to \(\mathbb F^{2m}\), with basis represented by small positively oriented \(\ell\)-spheres surrounding the points of \(P_m\). Since \(\partial H_{j,m}^{(\ell)}\) surrounds exactly \(p_{j,m}\), the classes \([\partial H_{j,m}^{(\ell)}],\) \(j=-m,\ldots,m-1,\) form this standard basis. By construction, \(\pi_*[\Gamma_{j,m}] = [\partial H_{j,m}^{(\ell)}].\) Thus
    \begin{align*}
        \pi_*:H_\ell(Z_{m,\ell};\mathbb F)
        \longrightarrow
        H_\ell(\R^{\ell+1}\setminus P_m;\mathbb F)
    \end{align*}
    is surjective.  Therefore
    \begin{align*}
        b_\ell(Z_{m,\ell})\ge 2m.
    \end{align*}
    This proves the lemma.
\end{proof}

\begin{proof}[Proof of Theorem \ref{thm:main-counterexample}]
    The harmonic functions \(u_{m,\ell}\) are defined in \eqref{eq:u-ell-def}. The uniform frequency bound follows from \eqref{eq:frequency-bound-general}, the regularity assertion follows from Lemma \ref{lem:regular-zero-general}, and the Betti-number lower bound follows from Lemma \ref{lem:higher-betti-lower}. This completes the proof.
\end{proof}

\begin{remark}\label{rem:regularized-betti}
    The same examples also give unbounded topology for the regularized Betti-number definition used in \cite{MR3479524}. Fix \(m\) and \(\ell\), and choose the points \(p_{j,m}\in H_{j,m}^{(\ell)}\).  Since there are only finitely many of them,
    \begin{align*}
        a_m\coloneqq \min_{j=-m,\ldots,m-1}\bigl(-\Phi_{m,\ell}(p_{j,m})\bigr)>0.
    \end{align*}
    Let
    \begin{align*}
        \Sigma_{\varepsilon,\theta}
        \coloneqq 
        \{u_{m,\ell}^2+\varepsilon^2|V|^2=\theta^2\}\cap B_{1/2}(0),
        \qquad V=(X,y,z,w).
    \end{align*}
    If \(0<\theta<a_m/2\), then no point of \(\Sigma_{\varepsilon,\theta}\) projects to \(P_m\), because over \(p_{j,m}\) one has
    \begin{align*}
        u_{m,\ell}(X,y,z,w)
        =
        \Phi_{m,\ell}(p_{j,m})-\ell y^2
        \le -a_m.
    \end{align*}
    Thus the projection to \(\R^{\ell+1}\setminus P_m\) is well-defined on \(\Sigma_{\varepsilon,\theta}\). The cycles \(\Gamma_{j,m}\) lie in a compact subset of the regular part of the nodal set. Hence \(|\nabla u_{m,\ell}|\) is bounded below on a small tubular neighborhood of their union. For all sufficiently small \(\theta>0\) and all \(0<\varepsilon\ll\theta\), the equation
    \begin{align*}
        u_{m,\ell}^2+\varepsilon^2|V|^2=\theta^2
    \end{align*}
    can therefore be solved in this tubular neighborhood as two nearby normal graphs over the nodal set. In particular, each \(\Gamma_{j,m}\) has a nearby cycle \(\Gamma_{j,m}^{\varepsilon,\theta}\subset \Sigma_{\varepsilon,\theta}.\) Their projections are small perturbations of \(\partial H_{j,m}^{(\ell)}\), and therefore represent the same standard generators of \(H_\ell(\R^{\ell+1}\setminus P_m;\mathbb F)\). Hence
    \begin{align*}
        b_\ell(\Sigma_{\varepsilon,\theta})\ge 2m
    \end{align*}
    for such \(\varepsilon\) and \(\theta\). Taking
    \((\varepsilon,\theta)\to(0,0)\) with \(\varepsilon/\theta\to0\) gives the same lower bound for the regularized limsup.
\end{remark}


\bibliographystyle{amsalpha}
\bibliography{references}

\end{document}